\title{Double EPW-sextics with actions of $\mathcal{A}_7$ and irrational GM threefolds}
\author[S.\ Billi]{Simone Billi}
\address{University of Genova}
 \email{{\tt simone.billi@edu.unige.it}}
 \author[T.\ Wawak]{Tomasz Wawak}
\address{Jagiellonian University}
 \email{{\tt tomasz.wawak@doctoral.uj.edu.pl}}
 \date{\today}
\lstdefinelanguage{Macaulay2}{
comment=[l]{--},
alsoletter={'},
}
\itshape\color{gray},
\newcommand{\hookuparrow}{\mathrel{\rotatebox[origin=c]{0}{$\hookrightarrow$}}}
\DeclareMathOperator{\Aut}{Aut} 
\DeclareMathOperator{\Homology}{H}
\DeclareMathOperator{\Gr}{Gr}
\DeclareMathOperator{\GL}{GL}
\DeclareMathOperator{\SL}{SL}
\DeclareMathOperator{\PGL}{PGL}
\DeclareMathOperator{\Jac}{Jac}
\DeclareMathOperator{\Alb}{Alb}
\DeclareMathOperator{\End}{End}
\DeclareMathOperator{\NS}{NS}
\DeclareMathOperator{\T}{T}
\DeclareMathOperator{\Spec}{Spec}
\DeclareFontShape{OT1}{cmtt}{bx}{n}{<5><6><7><8><9><10><10.95><12><14.4><17.28><20.74><24.88>cmttb10}{}
\newtheorem{thm}{Theorem}[section]
\newtheorem*{thm*}{Theorem}
\newtheorem{lem}[thm]{Lemma}
\newtheorem{prop}[thm]{Proposition}
\newtheorem{cor}[thm]{Corollary}
\newcommand{\hk}{hyper-K{\"a}hler}
\newcommand{\Hk}{Hyper-K{\"a}hler}
\newcommand{\KTsquare}{\hk{} fourfold of type K3$^{[2]}$}
\newcommand{\KTsquares}{\hk{} fourfolds of K3$^{[2]}$-type}
\begin{document}
\maketitle

\begin{abstract}
    We construct two examples of projective \KTsquares{} with an action of the alternating group $\mathcal{A}_7$. They are realized as double EPW-sextics and this allows us to construct two explicit families of irrational Gushel-Mukai threefolds.
\end{abstract}
\section*{Introduction}

We construct two explicit examples of \KTsquares{} with symplectic actions of the alternating group $\mathcal{A}_7$. According to the classification in \cite{Hohn2019FiniteGO}, this is one of the maximal finite groups that can act on a manifold of this type via symplectic automorphisms.  We describe the manifolds as double EPW-sextics. 
For each of them, the group of symplectic automorphisms $\mathcal{A}_7$ fixes their canonical ample divisor.

Consider a \(6\)-dimensional complex vector space \(V_6\).
There is a linear representation of \(\mathcal{A}_7\) on \( \bigwedge^3 V_6\) which decomposes into the direct sum of two Lagrangian subrepresentations \(A_1,A_2\); we set \(\mathbb{A}=A_1\) or \(A_2\). There is a canonical way to associate a double EPW-sextic to a general Lagrangian subspace of \( \bigwedge^3 V_6\), as explained in Section \ref{EPW_preliminary}. Our main result can be formulated as follows.
\begin{thm}\label{thm:maint}
     The double EPW-sextic associated to the Lagrangian subspace \(\mathbb{A}\) is a smooth polarized \hk{} fourfold \((Y_\mathbb{A},H_\mathbb{A})\). Moreover,  
    \[ \Aut_{H_\mathbb{A}}(Y_\mathbb{A})\cong\mathbb{Z}/2\mathbb{Z}\times \mathcal{A}_7 \]
    where \(\mathcal{A}_7\) corresponds to the subgroup of symplectic automorphisms. Furthermore, $(Y_{A_1}, H_{A_1})$ and $(Y_{A_2}, H_{A_2})$ are non-isomorphic as polarized manifolds.
\end{thm}
We can associate a family of GM varieties to such Lagrangian spaces as explained in Section \ref{GM_preliminary}. As an application, we show that for \(\mathbb{A}\) as above, we have the following result.
\begin{cor}\label{cor:mainc}
    Any smooth GM threefold associated to \(\mathbb{A}\) is irrational.
\end{cor}
The only other known explicit family of irrational GM threefold was constructed in \cite{debarre2021gushelmukai} and our result provides other two such families. 

This article is primarily inspired by \cite{debarre2021gushelmukai}, from where many ideas have been taken and adjusted to our case. In fact, the search for symmetric \hk{} manifolds has been of interest in recent years. One can find attempts at classification and explicit constructions in \cite{BH, BS} for K3 surfaces, in \cite{Hohn2019FiniteGO, Song, wawak, veryspecial} for \KTsquares{}. Our results should be seen as a continuation of these efforts.

The structure of the paper is the following: in the first section, we recall known facts about \hk{} manifolds and the lattice structure on their second integral cohomology group, EPW-sextics, and GM varieties. In the second section, we outline the construction of our examples and in the third, we prove the irrationality of the associated GM threefolds. In the appendix, we discuss the computation we performed with {\tt Macaulay2} \cite{M2}.

\section*{Acknowledgments}
We thank our advisors Giovanni Mongardi and Grzegorz Kapustka for their advice and supervision of the project, Micha\l\ Kapustka for helpful suggestions about the computations and thoughtful discussions, Jerzy Weyman for explanations on representation theory, and Lars Harvald Halle for useful comments concerning computations over a finite field.  During the writing of this paper, the second author was supported by the project Narodowe Centrum Nauki 2018/30/E/ST1/00530 .

\section{Preliminaries}\label{preliminaries}
This section is merely a collection of known facts about hyper-Kähler manifolds and associated lattices, double EPW-sextics, and GM varieties. 

\subsection{\Hk{} manifolds and their lattices}
A \hk{} manifold is a compact, simply connected complex K\"ahler manifold such that $\Homology^{2,0}(X) \cong \Homology^0(X, \bigwedge\nolimits^2\Omega_X) \cong \mathbb{C}\sigma_X$ where $\sigma_X$ is a nowhere degenerate holomorphic 2-form. For a \hk{} manifold, the group $\Homology^2(X, \mathbb{Z})$ has a canonical lattice structure, more precisely it is a free $\mathbb{Z}$-module with an integral bilinear form of signature \((3,b_2(X)-3)\). The N{\'e}ron-Severi group (or lattice) of $X$ is defined as
\begin{align*}
    \NS(X) = \Homology^{2}(X, \mathbb{Z}) \cap \Homology^{1,1}(X) \subset \Homology^2(X, \mathbb{C}).
\end{align*}
The transcendental lattice $\T(X)$ is its orthogonal complement with respect to the bilinear form. The following is an easy to prove corollary of \cite[Proposition 4.1]{survey}.
\begin{prop}\label{finite_group}
    Let $X$ be a projective \hk{} manifold. A group of automorphisms $G\subset \Aut(X)$ is finite if and only if it fixes an ample class on $X$.
\end{prop}
For the general theory of \hk{} manifolds, we refer for example to \cite{survey}.

\subsection{EPW-sextics, their double covers and automorphisms}\label{EPW_preliminary}
EPW-sextics are singular sextic hypersurfaces in $\mathbb{P}^5$, first constructed by Eisenbud, Popescu and Walter (\cite{eisenbud2001lagrangian}).  O'Grady showed in \cite{o2006irreducible} that they have a natural double cover, which is in general a smooth \KTsquare{}. We briefly recall the construction.

Fix a \(6\)-dimensional $\mathbb{C}$-vector space $V_6$ and choose an isomorphism $\bigwedge\nolimits^6 V_6\cong \mathbb{C}$. It induces a symplectic form $(\alpha,\beta):=\alpha\wedge\beta$ on $\bigwedge\nolimits^3 V_6\cong \mathbb{C}^{20}$. Denote by \(\mathbb{LG}(\bigwedge\nolimits^3V_6)\) the Grassmannian of Lagrangian subspaces. 
Consider the vector subbundle $F\subset \bigwedge\nolimits^3 V_6\otimes \mathcal{O}_{\mathbb{P}(V_6)}$ whose fiber over $[v]\in\mathbb{P}(V_6)$ is given by:
$$ F_v=\{\alpha\in\bigwedge\nolimits^3 V_6 \mid \, \alpha \wedge v=0\}.$$
To an element \( A\in\mathbb{LG}(\bigwedge\nolimits^3V_6)\) one associates the loci
\begin{align*}
    Y_A[k] &= \{ [v]\in \mathbb{P}(V_6)\mid\dim(A\cap F_v)\geq k\},
\end{align*} and
$Y_A = Y_A[1]$ is called an \textit{EPW-sextic}. We define two divisors:
\begin{align*}
    \Sigma &=\{A \in \mathbb{LG}(\bigwedge\nolimits^3V_6) \, \mid \, \mathbb{P}
    (A)\cap \Gr(3,V_6)=\emptyset \} \\
    \Delta&=\{ A \in \mathbb{LG}(\bigwedge\nolimits^3V_6) \, \mid \,Y_A[3] = \emptyset \}.
\end{align*} 

For \(A\in\mathbb{LG}(\bigwedge\nolimits^3V_6)\smallsetminus(\Sigma\cup\Delta)\) there is a canonical double cover $\pi_A\colon\widetilde{Y}_A\rightarrow Y_A$ branched along the surface $Y_A[2]$, where \(\widetilde{Y}_A\) is a smooth \KTsquare{} called \textit{double EPW-sextic}, originally defined in \cite{o2006irreducible}. Moreover, $\widetilde{Y}_A$ carries a canonical polarization $H=\pi_A^*\mathcal{O}_{Y_A}(1)$ and the image of the morphism $\widetilde{Y}_A\rightarrow \mathbb{P}(\Homology^0(\widetilde{Y}_A,H)^{\vee})$ is isomorphic to $Y_A$.

Recall that if \(A\not\in\Sigma\) the automorphisms of the sextics are all linear:
\begin{equation}\label{eq1}
    \Aut(Y_A) =\{g\in \PGL(V_6)\mid(\bigwedge\nolimits^3g)(A)=A\}
\end{equation}
and this is a finite group by \cite[Proposition B.9]{2018}.

 Consider the embedding $\Aut(Y_A)\hookuparrow \PGL(V_6)$ and let $G$ be the inverse image of $\Aut(Y_A)$ via the canonical map $\SL(V_6)\rightarrow \PGL(V_6)$. Then, $G$ is an extension of $\Aut(Y_A)$ by the cyclic group $\langle \gamma\rangle$ of order $6$, so we have an induced representation of $G$ on $\bigwedge\nolimits^3 V_6$ and this factors through a representation of $\widetilde{\Aut}(Y_A):=G/\langle \gamma^2\rangle$. Since $A$ is preserved by this action, we have a morphism of central extensions 
 \begin{equation}\label{diagram}
    \begin{tikzcd}
1 \arrow[r] & \langle \gamma^3\rangle \arrow[r] \arrow[d, hook] & \widetilde{\Aut}(Y_A) \arrow[r] \arrow[d] & \Aut(Y_A) \arrow[r] \arrow[d] & 1 \\
1 \arrow[r] & \mathbb{C}^* \arrow[r]                            & \GL(A) \arrow[r]                      & \PGL(A) \arrow[r]    & 1
\end{tikzcd}
 \end{equation}
 and by \cite[Lemma A.1]{debarre2021gushelmukai} the vertical maps are injective.
 
 Denote by $\Aut_H(\widetilde{Y}_A)$ the group of automorphisms fixing the polarization \(H\). 
 \begin{prop}[Kuznetsov, {\cite[Proposition A.2]{debarre2021gushelmukai}}]\label{kuz}
 Let $A\notin \Sigma$ be a Lagrangian space. There is an isomorphism 
 \[\Aut_H(\widetilde{Y}_A)\cong \Aut(Y_A)\times \langle\iota\rangle\] where \(\iota\) is the anti-symplectic involution associated to the double covering \(\pi_A\) and the factor $\Aut(Y_A)$ corresponds to the subgroup of symplectic automorphisms.
 \end{prop}
 When \(A\notin \Sigma\), there is a canonical connected double covering \cite[Theorem 5.2(2)]{debarre2019double}
\begin{equation*}
    \widetilde{Y}_A[2]\rightarrow Y_A[2]
\end{equation*} and by \cite[Proposition A.6 (Kuznetsov)]{debarre2021gushelmukai} we have an injection $\widetilde{\Aut}(Y_A)\hookuparrow \Aut(\widetilde{Y}_A[2])$.

Recall that the analytic representation of a finite group $G$ acting on an Abelian variety $X$ is the composition 
\begin{equation}
    G\rightarrow \End_\mathbb{Q}(X)\rightarrow\End_\mathbb{C}(T_{X,0}).
\end{equation}

We recall the following useful result, where \(\Alb(\widetilde{Y}_A[2])\) denotes the Albanese variety of \(\widetilde{Y}_A[2]\).
\begin{prop}[{\cite[Proposition A.7]{debarre2021gushelmukai}}]\label{analiticrep}
Suppose that $Y_A[3]=\emptyset$. The restriction to the subgroup $\widetilde{\Aut}(Y_A)$ of the analytic representation of $\Aut(\widetilde{Y}_A[2])$ on $\Alb(\widetilde{Y}_A[2])$ is the injective middle vertical map in the diagram (\ref{diagram}).
\end{prop}

\subsection{GM varieties and their automorphisms}\label{GM_preliminary}
Let $V_5$ be a $5$-dimensional complex vector space.
A GM variety (shorthand for ordinary Gushel-Mukai) of dimension $n\in\{3,4,5\}$ is the smooth complete intersection of the Grassmannian $\Gr(2,V_5)\subset \mathbb{P}(\bigwedge\nolimits^2 V_5)$ with a linear space $\mathbb{P}^{n+4}$ and a quadric. These varieties are Fano varieties with Picard number $1$, index $n-2$ and degree $10$.

There is a bijection between the set of isomorphism classes of GM varieties of dimension $n$ and isomorphism classes of triples $(V_6,V_5,A)$, where $A\in \mathbb{LG}(\bigwedge\nolimits^3V_6)\smallsetminus\Sigma$ and $V_5\subset V_6$ is a hyperplane that satisfies 
\begin{equation*}
    \dim(A\cap \bigwedge\nolimits^3V_5)=5-n.
\end{equation*}
The correspondence is outlined in \cite[Theorem 3.10 and Proposition 3.13(c)]{2018}.

In particular, for \(A\in \mathbb{LG}(\bigwedge\nolimits^3V_6)\smallsetminus(\Sigma\cup\Delta)\) there is an associated double EPW-sextic and associated families of GM varieties of dimensions \(3,4\) and \(5\).

\section{EPW-sextics with an action of $\mathcal{A}_7$}
 The general idea is to find a Lagrangian \(A\subset \bigwedge^3 V_6\) which is invariant under the action of the group $\mathcal{A}_7$, to get an invariant EPW-sextic. According to \cite{Wilson1985ATLASOF}, there exists a group denoted by $3.\mathcal{A}_7$ with $\eta$ an element of order $3$ such that $3.\mathcal{A}_7/\langle \eta\rangle\cong \mathcal{A}_7$. This group has a unique irreducible representation $\rho\colon 3.\mathcal{A}_7\rightarrow \mathbb{C}^6\cong V_6$ and \(\eta\) acts by multiplication for a third root of unity. Generators for this representation can be found in \cite{ATLAS}. Notice that this induces a representation on $\bigwedge\nolimits^3 V_6$ which descends to a representation of the quotient $\mathcal{A}_7$ on $\bigwedge\nolimits^3 V_6$. Denote this (faithful) representation of $\mathcal{A}_7$ by $W$.

The irreducible complex representations of $\mathcal{A}_7$ of dimension smaller than or equal to $20$ have dimensions $1,6,10,10,14,14$, and $15$ and are described in \autoref{Characters}.
\begin{table}[ht]
\scalebox{.8}{
\def\arraystretch{1.5}
$\begin{array}{|r|ccccccccc|}
\hline
\text{Conj.\ class}&id&[ab^{-1}ab]&[a]&[a^{-1}bab]&[a^{-1}bab^2]&[b]&[ababab^2]&[ab]&[a^{-1}b]\\ 
\hline W_0& 1   & 1  & 1  & 1  & 1   & 1   & 1               & 1  & 1             \\
W_6    & 6  &2&3&0&0&1&-1&-1&-1\\
W_{10} & 10  &-2&1&1&0&0&1&-\frac{1}{2}(1-i\sqrt{7}) & -\frac{1}{2}(1+i\sqrt{7})\\
W'_{10}  & 10  &-2&1&1&0&0&1&-\frac{1}{2}(1+i\sqrt{7}) & -\frac{1}{2}(1-i\sqrt{7})\\
W_{14} & 14  &2&2&-1&0&-1&2&0 & 0 \\
W'_{14}  & 14  &2&-1&2&0&-1&-1&0 & 0\\
W_{15}  & 15  &-1&3&0&-1&0&-1&1 & 1\\   
\hline
W& 20  &-4&2&2&0&0&2& -1& -1   \\
\hline
\end{array}$}
\caption{\label{Characters}Character table of $\mathcal{A}_7$, according to \cite{Wilson1985ATLASOF}.}
\end{table}
\begin{lem}
The representation $W$ decomposes as the direct sum \(W=A_1\oplus A_2 \) of the only two irreducible 10-dimensional representations $R_1=(A_1,\rho_1)\cong W_{10}$ and $R_2=(A_2,\rho_2)\cong W'_{10}$ of the group $\mathcal{A}_7$. Moreover the underlying vector spaces $A_1,A_2\subset\bigwedge\nolimits^3 V_6$ of those representations are Lagrangian.
\end{lem}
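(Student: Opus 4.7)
The plan is two-pronged: first decompose $W$ using character theory, and then deduce the Lagrangian property from Schur's lemma together with the fact that the two ten-dimensional irreducibles of $\mathcal{A}_7$ are complex conjugates of each other and, in particular, not self-dual.

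For the decomposition I first note that $\bigwedge\nolimits^3 \overline{V}_6$ really descends to a representation of $\mathcal{A}_7 = 3.\mathcal{A}_7/\langle \omega\rangle$: the central element $\omega$ acts on $\overline{V}_6$ as multiplication by $\xi_3$, hence on $\bigwedge\nolimits^3 \overline{V}_6$ as multiplication by $\xi_3^3 = 1$. Next, for each of the nine conjugacy classes of $\mathcal{A}_7$ I would pick a preimage $g\in 3.\mathcal{A}_7$, read the eigenvalues $\lambda_1,\ldots,\lambda_6$ of $\rho(g)$ from the explicit matrices $\alpha,\beta$ (or, equivalently, compute $\chi_{\overline{V}_6}(g^k)$ for $k=1,2,3$ and apply Newton's identities), and obtain
\[
  \chi_W(g) = e_3(\lambda_1,\ldots,\lambda_6).
\]
Comparing these nine values with $\chi_{R_1}+\chi_{R_2}$ from the ATLAS character table yields $W\cong R_1\oplus R_2$. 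This is a finite, essentially mechanical verification, naturally carried out in {\tt GAP} (as the authors do elsewhere in the paper); the only genuine arithmetic input is matching the quadratic irrationalities $(-1\pm\sqrt{-7})/2$ that appear on the two $7$-cycle classes.

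For the Lagrangian property, the symplectic form $(\alpha,\beta)=vol(\alpha\wedge\beta)$ on $\bigwedge\nolimits^3 V_6$ is preserved by any linear map of $V_6$ of determinant $1$, since $\bigwedge\nolimits^6 g$ multiplies $vol$ by $\det g$. One checks directly that $\det\alpha=\det\beta=1$ (alternatively, $3.\mathcal{A}_7$ is perfect, so the representation automatically lands in $\SL$), whence the symplectic form on $W$ is $\mathcal{A}_7$-equivariant and induces an $\mathcal{A}_7$-equivariant isomorphism $\phi\colon W\to W^*$. Those same $7$-cycle character values show that $\chi_{R_i}$ is not real-valued, so $R_i\not\cong R_i^*$; in fact $R_1^*\cong R_2$ and $R_2^*\cong R_1$. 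Applying Schur's lemma to the composition $A_i \hookrightarrow W \xrightarrow{\phi} W^*= R_1^*\oplus R_2^* = R_2\oplus R_1$, the only non-zero component lands in the summand opposite to $A_i$, i.e., in the annihilator of $A_i\subset W$; hence the symplectic form restricts to zero on $A_i\times A_i$. Since $\dim A_i = 10 = \tfrac12\dim\bigwedge\nolimits^3 V_6$, each $A_i$ is Lagrangian. The only real obstacle is bookkeeping in the character computation; conceptually nothing deeper than Schur together with the explicit matrices is needed.
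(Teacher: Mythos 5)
Your proof is correct, and the two halves compare differently with the paper's. For the decomposition $W\cong R_1\oplus R_2$ you do exactly what the authors do: a finite character computation (they delegate it to {\tt GAP}; your observation that $\omega$ acts on $\overline{V}_6$ by a scalar cube root of unity, hence trivially on $\bigwedge\nolimits^3\overline{V}_6$, is in fact a slightly cleaner justification of the descent to $\mathcal{A}_7$ than the paper's ``since $\omega$ has order $3$''). For the Lagrangian property, however, you take a genuinely different route. The paper simply says this is ``easily checked with computer algebra,'' i.e.\ it verifies $(\,\cdot\,,\cdot\,)|_{A_i\times A_i}=0$ on explicit bases. You instead argue conceptually: since $3.\mathcal{A}_7$ is perfect the representation lands in $\SL(V_6)$, so the symplectic form $vol(\alpha\wedge\beta)$ is equivariant and gives an $\mathcal{A}_7$-isomorphism $W\to W^*$; the character values $-\tfrac12(1\pm i\sqrt{7})$ show $R_1^*\cong R_2\not\cong R_1$, so Schur's lemma forces $\phi(A_i)$ into the annihilator of $A_i$, and isotropic of half dimension means Lagrangian. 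This argument is airtight (it does use the multiplicity-one decomposition from the first half, which you have), and it buys something the computer check does not: it explains \emph{why} the summands are Lagrangian --- it is forced by the representation theory, for any invariant Lagrangian-splitting of this kind --- and it removes one of the two computer verifications entirely. The only part of your argument that still requires explicit computation is the character of $W$ itself, which is unavoidable and is exactly what the paper computes.
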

\begin{proof}
The fact that $W$ has the mentioned decomposition is just a computation of characters (see \autoref{Characters}). The fact that the subrepresentations are Lagrangian is easily checked with computer algebra after determining the invariant spaces with a projection formula. Auxiliary files with {\tt GAP} \cite{GAP4} codes are provided as attachments. 
\end{proof}

As a consequence of (\ref{eq1}), setting $\mathbb{A}=A_1$ or $A_2$ leads to an EPW-sextic $Y_{\mathbb{A}} \subset \mathbb{P}^5$ which is invariant under the action of $\mathcal{A}_7$. The two sextics $Y_{A_1}$ and $Y_{A_2}$ are dual (cf. \cite[Section 3]{o2006irreducible}). From now on, $\mathbb{A}$ will denote one of the Lagrangian spaces \(A_1\) or \(A_2\).
\begin{prop}\label{nodec}
    We have $\mathbb{A} \in \mathbb{LG}(\bigwedge\nolimits^3V_6)\smallsetminus(\Sigma\cup\Delta)$, hence the double cover $\widetilde{Y}_\mathbb{A}\rightarrow Y_\mathbb{A}$ is a smooth \hk{} fourfold.
\end{prop}

\begin{proof}
Following \cite{o2012epw}, we need to prove that $\mathbb{A}$ does not belong to $\Sigma$ and $\Delta$.

The computations with {\tt Macaulay2} \cite{M2} in Appendix \ref{deg40locus_code} show that $Y_\mathbb{A}[3]$ is empty, hence \(\mathbb{A}\notin \Delta\). 

According to \cite[Proposition 1.5]{ferretti2009chow}, the singular locus of \(Y_A\) is given by the union of the 40-degree surface $Y_\mathbb{A}[2]$ and the planes of the form $\mathbb{P}(U)$, where $U$ is a three-dimensional subspace of $V_6$ such that $\bigwedge\nolimits^3 U\subset \mathbb{A}$. 
The computations in Appendix \ref{deg40locus_code} show that the singular locus of \( Y_\mathbb{A}\) has degree $40$ so it must coincide with $Y_\mathbb{A}[2]$ (cf. \cite[Corollary 1.10]{o2012epw}), thus \(\mathbb{A}\notin\Sigma\). This completes the proof.

\end{proof}

Denote by $\Aut^s_H(\widetilde{Y}_A)$ the group of automorphisms of \(\widetilde{Y}_A\) that are symplectic and fix the polarization \(H\).

\begin{prop} \label{isom}There is an isomorphism $\Aut^s_H(\widetilde{Y}_\mathbb{A})\cong \mathcal{A}_7$. 
\end{prop}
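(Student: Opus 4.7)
The plan is to combine Proposition \ref{kuz}, which gives $\Aut^s_H(\widetilde{Y}_\mathbb{A}) \cong \Aut(Y_\mathbb{A})$, with a representation-theoretic upper bound on $\Aut(Y_\mathbb{A})$. The inclusion $\mathcal{A}_7 \hookrightarrow \Aut(Y_\mathbb{A})$ is immediate from the $3.\mathcal{A}_7$-action constructed above, together with (\ref{eq1}) and the central quotient $3.\mathcal{A}_7/\langle\omega\rangle \cong \mathcal{A}_7$, so only the reverse inclusion requires work.

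For the upper bound, I would use the injective linear action of the finite group $\widetilde{\Aut}(Y_\mathbb{A})$ on $\mathbb{A}\cong\mathbb{C}^{10}$ provided by diagram (\ref{diagram}). The $3.\mathcal{A}_7$-action descends modulo its central $\langle\gamma^2\rangle$ to an embedded copy of $\mathcal{A}_7$ in $\widetilde{\Aut}(Y_\mathbb{A})$ acting on $\mathbb{A}$ via one of the faithful irreducible $10$-dimensional representations $R_i$. Since $R_i$ is irreducible, Schur's lemma identifies the centralizer of this $\mathcal{A}_7$ in $\GL(\mathbb{A})$ with $\mathbb{C}^{*}$, and hence its intersection with the finite group $\widetilde{\Aut}(Y_\mathbb{A})$ is a finite cyclic group of scalars.

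Assuming $\mathcal{A}_7$ is normal in $\widetilde{\Aut}(Y_\mathbb{A})$, conjugation induces a homomorphism $\widetilde{\Aut}(Y_\mathbb{A}) \to \Aut(\mathcal{A}_7)$, whose kernel is the scalar centralizer, so the quotient by $\mathcal{A}_7$ and by scalars injects into $\mathrm{Out}(\mathcal{A}_7) \cong \mathbb{Z}/2$. The nontrivial outer automorphism of $\mathcal{A}_7$, induced by the inclusion $\mathcal{A}_7 \triangleleft \mathcal{S}_7$, swaps the two $10$-dimensional irreducibles $R_1$ and $R_2$ (as one reads off from the character table in Appendix \ref{character_table}). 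Consequently no coset representative of such a $\mathbb{Z}/2$-extension can preserve the specific Lagrangian $\mathbb{A}=A_i\subset\bigwedge\nolimits^3 V_6$, forcing $\Aut(Y_\mathbb{A}) = \mathcal{A}_7$.

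The step I expect to be the main obstacle is justifying that $\mathcal{A}_7$ is in fact normal inside $\widetilde{\Aut}(Y_\mathbb{A})$, i.e.\ ruling out exotic finite supergroups of $\mathcal{A}_7$ in $\PGL(V_6)$ that contain $\mathcal{A}_7$ non-normally while still stabilizing $\mathbb{A}$. By simplicity of $\mathcal{A}_7$ and the restricted list of finite primitive subgroups of $\GL(V_6)$, the possibilities are very constrained; a fully rigorous argument may either invoke such a classification or, in the spirit of the rest of the paper, proceed by a direct \texttt{GAP}/\texttt{Macaulay2} computation of the stabilizer of $\mathbb{A}$ in $\PGL(V_6)$ analogous to the computations in Appendix \ref{deg40locus_code}.
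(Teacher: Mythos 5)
Your argument has a genuine gap, and it is exactly the one you flag yourself: the normality of $\mathcal{A}_7$ in $\Aut(Y_\mathbb{A})$ (equivalently, in $\widetilde{\Aut}(Y_\mathbb{A})$). Everything downstream of that assumption is fine --- Schur's lemma identifies the centralizer with scalars, and the outer automorphism of $\mathcal{A}_7$ does swap the Galois-conjugate characters $V_{10}$ and $V_{10}'$, so a normalizing element inducing it would have to exchange the isotypic components $A_1$ and $A_2$ of $\bigwedge^3 V_6$ and hence could not stabilize $\mathbb{A}$. But without normality the argument does not close, and establishing it is not a formality: $3.\mathcal{A}_7$ sits \emph{non-normally} inside larger finite primitive subgroups of $\SL(V_6)$ (for instance inside $6.\mathrm{PSU}_4(3)$ and intermediate groups), so you would genuinely have to rule out that such an overgroup stabilizes the specific Lagrangian $\mathbb{A}$, either by invoking Lindsey's classification of finite primitive subgroups of $\SL_6(\mathbb{C})$ and checking each candidate, or by the explicit stabilizer computation you mention. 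As written, the proof is a conditional statement plus a to-do list for its hardest step.

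The paper takes a completely different and much shorter route that sidesteps all of this: by Lemma \ref{finite} the group $\Aut^s_H(\widetilde{Y}_\mathbb{A})$ is finite, and by the classification of finite symplectic automorphism groups of \KTsquares{} in \cite[Theorem A and Table 6]{Hohn2019FiniteGO} the group $\mathcal{A}_7$ is maximal among such groups; hence the inclusion $\mathcal{A}_7\hookuparrow \Aut^s_H(\widetilde{Y}_\mathbb{A})$ is an equality. In effect, the lattice-theoretic classification does for free the work that your approach would have to redo by hand inside $\GL(V_6)$. If you want a self-contained linear-algebraic proof along your lines, the missing ingredient is a concrete bound on finite subgroups of $\PGL(V_6)$ containing the image of $3.\mathcal{A}_7$ and preserving $\mathbb{A}$; until that is supplied, you should rely on the classification result as the paper does.
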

\begin{proof} Since \(\mathbb{A}\not\in\Sigma\), we know by \autoref{kuz} that \(\Aut(Y_\mathbb{A})\cong\Aut^s_H(\widetilde{Y}_\mathbb{A})\). By construction, the action of \(\mathcal{A}_7\) on \(\bigwedge^3 V_6\) is induced by linear transformations of \(V_6\). Using equality (\ref{eq1}) combined with the previous isomorphism we get the inclusion \(\mathcal{A}_7\subseteq\Aut^s_H(\widetilde{Y}_\mathbb{A}) \). From \autoref{finite_group}, it follows that $\Aut^s_H(\widetilde{Y}_\mathbb{A})$ is finite,  and by the maximality of the group in \cite[Theorem A and Table 6]{Hohn2019FiniteGO}, one concludes that the inclusion $\mathcal{A}_7\subseteq \Aut^s_H(\widetilde{Y}_\mathbb{A})$ is in fact an equality.
\end{proof}

Now we are ready to show that the two examples we found, $\widetilde{Y}_{A_1}$ and $\widetilde{Y}_{A_2}$, are not isomorphic as polarized manifolds. We will need the following lemmas.

\begin{lem}\label{proj_repr}The projective representations of $\mathcal{A}_7$ induced by \(R_1=(A_1,\rho_1)\) and \(R_2=(A_2,\rho_2)\) are not isomorphic as projective representations.\end{lem}\begin{proof}Suppose by contradiction that the two representations are projectively isomorphic. By assumption there exist a linear map \(\phi \colon A_1 \rightarrow A_2\) and a homomorphism \(\mu \colon \mathcal{A}_7 \rightarrow \mathbb{C}^*\) such that for any \(g\in\mathcal{A}_7\)\[\rho_1(g)  = \mu(g) \cdot (\phi^{-1} \circ \rho_2(g) \circ \phi) ,\]which implies \(\chi_{\rho_1}(g)=\mu(g) \chi_{\rho_2}(g)\).
Consider generators \(a, b \in \mathcal{A}_7\) as in \autoref{Characters}, observe that \(\chi_{\rho_1}(a) = \chi_{\rho_2}(a)\) and hence \(\mu(a)=1\). By multiplicativity, we have \(\mu(a^{-1}b)=\mu(a b)\), on the other hand \(\mu(a^{-1}b) = \frac{x}{y}\) and \(\mu(ab) = \frac{y}{x}\), where \(x = -\frac{1}{2}(1+i\sqrt{7})\) and \(y=-\frac{1}{2}(1-i\sqrt{7})\). This leads to a contradiction, since \(\frac{x}{y} \not= \frac{y}{x}\).\end{proof}

\begin{lem}\label{lem_not_isomorphic}
There is no $f \in \GL(V_6)$ such that $(\bigwedge^3 f)(A_1)= A_2$.
\end{lem}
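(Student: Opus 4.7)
I would argue by contradiction: suppose $f\in\GL(V_6)$ satisfies $\bigwedge^{3} f(A_1)=A_2$. Its projective class $[f]\in\PGL(V_6)$ then sends $Y_{A_1}$ onto $Y_{A_2}$, and hence conjugates $\Aut(Y_{A_1})$ onto $\Aut(Y_{A_2})$. By (\ref{eq1}) combined with Propositions \ref{kuz} and \ref{isom}, both automorphism groups coincide with the same subgroup $G\subset\PGL(V_6)$ --- the image of the composition $3.\mathcal{A}_7\xrightarrow{\rho}\GL(V_6)\to\PGL(V_6)$ --- because each of them contains $G$ (as $\mathcal{A}_7$ preserves both Lagrangians) and the three groups have the same cardinality $|\mathcal{A}_7|$. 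Therefore $[f]$ normalises $G$.

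Next I would control the normaliser of $G$ in $\PGL(V_6)$. Schur's lemma applied to the irreducible $3.\mathcal{A}_7$-module $\overline{V}_6$, together with the fact that $3.\mathcal{A}_7$ is perfect (so every one-dimensional character is trivial), shows that the centraliser of $G$ in $\PGL(V_6)$ is trivial: the scalars $\lambda_{\tilde g}$ defined by $h\,\rho(\tilde g)\,h^{-1}=\lambda_{\tilde g}\,\rho(\tilde g)$ assemble into a character $3.\mathcal{A}_7\to\mathbb{C}^{\ast}$, hence are identically one, after which Schur forces $h$ to be a scalar. Thus $N_{\PGL(V_6)}(G)/G$ injects into $\operatorname{Out}(\mathcal{A}_7)\cong\mathbb{Z}/2$. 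If $[f]$ lies in $G$, then $f$ is a scalar times some $\rho(\tilde g)$ preserving $A_1$, which forces $\bigwedge^{3}f(A_1)=A_1\ne A_2$ --- contradiction.

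The crux is therefore the remaining case, in which conjugation by $[f]$ realises the outer automorphism $\phi$ of $\mathcal{A}_7$. Fixing a lift $\tilde\phi\in\Aut(3.\mathcal{A}_7)$ of $\phi$, the relation $f\,\rho(\tilde g)\,f^{-1}=c(\tilde g)\,\rho(\tilde\phi(\tilde g))$ defines, by multiplicativity in $\tilde g$, a one-dimensional character $c:3.\mathcal{A}_7\to\mathbb{C}^{\ast}$; perfectness of $3.\mathcal{A}_7$ again makes $c$ trivial, so $f$ provides a linear isomorphism between $\overline{V}_6$ and its pull-back along $\tilde\phi$. In particular the character of $\overline{V}_6$ must be $\tilde\phi$-invariant, and this is the main obstacle to overcome. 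It fails on the two conjugacy classes of order-$7$ elements of $\mathcal{A}_7$: these are swapped by $\phi$ (they fuse in $S_7$), whereas the character of $\overline{V}_6$ on a $7$-cycle is a non-real sum of six $7$-th roots of unity and therefore takes complex conjugate --- hence distinct --- values on the two classes, as visible in the character table of Appendix \ref{character_table}. This yields the desired contradiction.
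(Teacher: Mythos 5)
Your strategy is genuinely different from the paper's (which compares the two $10$\nobreakdash-dimensional representations induced on $A_2$ and argues the group they generate is too large), and steps (1)--(4) are correct: $[f]$ must normalise $G=\mathrm{im}\bigl(3.\mathcal{A}_7\to\PGL(V_6)\bigr)$, the centraliser of $G$ is trivial, the inner case is excluded, and the outer case reduces to showing $\rho\not\cong\rho\circ\tilde\phi$. But the verification of this last point is wrong, and it is the crux. The character of $\overline{V}_6$ takes the \emph{same} value, namely $-1$, on both classes of order-$7$ elements of $3.\mathcal{A}_7$: the generators $\alpha,\beta$ exhibit $\rho$ over $\mathbb{Q}(\xi_3)$, so all character values lie in $\mathbb{Q}(\xi_3)$, while the values on the two order-$7$ classes are exchanged by the Galois automorphism $\zeta_7\mapsto\zeta_7^{3}$ of $\mathbb{Q}(\xi_{21})/\mathbb{Q}(\xi_3)$, which fixes $\mathbb{Q}(\xi_3)$ pointwise --- hence the two values coincide. (Equivalently, the eigenvalues of $\rho(\tilde g)$ for $\tilde g$ of order $7$ are the six nontrivial $7$-th roots of unity, summing to $-1$; this is what is consistent with $\chi_W=\chi_{R_1}+\chi_{R_2}=-1$ on those classes. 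The table you cite is the character table of $\mathcal{A}_7$, not of $3.\mathcal{A}_7$, and its $6$-dimensional row also reads $-1,-1$ there, so it contradicts rather than supports your claim.) As written, checking the $7$-classes produces no contradiction and the proof does not close.

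The assertion $\rho\not\cong\rho\circ\tilde\phi$ is nevertheless true, but the obstruction lives on the centre rather than on the $7$-classes: any lift $\tilde\phi$ of the outer automorphism (which exists --- lift to the universal central extension $6.\mathcal{A}_7$ and descend, every subgroup of the cyclic Schur multiplier being characteristic) must invert $\omega$. Indeed, if it fixed the centre pointwise, the resulting group $3.\mathcal{A}_7.2$ would be a central extension of $\mathcal{S}_7$ by $\mathbb{Z}/3$; since $\Homology^2(\mathcal{S}_7,\mathbb{Z}/3)=0$ such an extension is $\mathbb{Z}/3\times\mathcal{S}_7$, whose derived subgroup is $\mathcal{A}_7$ and cannot contain the perfect group $3.\mathcal{A}_7$. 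Hence $\chi_\rho(\tilde\phi(\omega))=6\xi_3^{-1}\neq 6\xi_3=\chi_\rho(\omega)$, i.e.\ $\rho\circ\tilde\phi$ is the complex-conjugate $6$-dimensional representation and not $\rho$. With this substitution your argument is complete; note that the behaviour of $\tilde\phi$ on the centre is precisely the content you glossed over when you wrote ``fixing a lift $\tilde\phi$'' without comment.
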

\begin{proof}Assume for a contradiction that there is such an \(f\in \GL(V_6)\). 
Consider the representations $R_1=(A_1,\rho_1)$ and $R_2=(A_2,\rho_2)$ and set \(h:=(\bigwedge^3 f)_{\mid A_1}\colon A_1\rightarrow A_2.\)
Notice that there is an isomorphism of representations
\begin{align*}
      (A_2,\widetilde{\rho})\cong R_1,
\end{align*}
where \(\widetilde{\rho}(g)=h \circ \rho_1(g) \circ h^{-1}\) for \(g\in\mathcal{A}_7\), and recall that \(R_1\) is not isomorphic to \(R_2\). Let \(G\subset\PGL(V_6)\) be the subgroup generated by the classes of linear maps of the form \(\rho_2(g)\) and \(h\circ \rho_1(g)\circ h^{-1}\) for \(  g\in \mathcal{A}_7\). 
There are inclusions
\[\rho_2(\mathcal{A}_7) \subset G\subset \Aut(Y_{A_2}),\]
where the second inclusion follows from (\ref{eq1}) since all the elements of \(G\) are expressed by third wedges of linear transformations of $V_6$ which preserve the Lagrangian space $A_2$, and the first inclusion is strict. In fact, if equality held that would mean that the projective representations induced by \(R_1\) and \(R_2\) were equivalent, which is not the case by virtue of \autoref{proj_repr}. We conclude using the isomorphisms 
\begin{align*}
    \Aut(Y_{A_2})\cong \Aut_H^s(\widetilde{Y}_{A_2})\cong\mathcal{A}_7
\end{align*} from Propositions \ref{kuz} and \ref{isom} to get a contradiction.

\end{proof}

\begin{prop}\label{non_iso}
    The manifolds $(\widetilde{Y}_{A_1}, H_1)$ and $(\widetilde{Y}_{A_2}, H_2)$ are not isomorphic as polarized manifolds where $H_i = \pi_{A_i}^*\mathcal{O}_{Y_{A_i}}(1)$ for $i\in\{ 1,2\}.$
\end{prop}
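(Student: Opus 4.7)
The plan is to reduce the statement to Lemma \ref{lem_not_isomorphic} by pushing any hypothetical polarized isomorphism downstairs to an isomorphism of the two EPW-sextics induced by $\PGL(V_6)$.

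Suppose for contradiction that there is an isomorphism $\phi\colon(\widetilde Y_{A_1},H_1)\to(\widetilde Y_{A_2},H_2)$ with $\phi^*H_2=H_1$. Since $H_i=\pi_{A_i}^*\mathcal O_{Y_{A_i}}(1)$, the linear system $|H_i|$ realises the natural map $\widetilde Y_{A_i}\to\mathbb P(H^0(\widetilde Y_{A_i},H_i)^\vee)$ whose image is $Y_{A_i}\subset\mathbb P(V_6)$ (cf.\ the discussion preceding (\ref{eq2})). The pullback isomorphism $\phi^*\colon H^0(\widetilde Y_{A_2},H_2)\xrightarrow{\cong}H^0(\widetilde Y_{A_1},H_1)$ therefore induces an isomorphism of the ambient projective spaces carrying $Y_{A_2}$ onto $Y_{A_1}$. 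This exhibits an element $[f]\in\PGL(V_6)$ with $f(Y_{A_1})=Y_{A_2}$.

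Next I would invoke the Lagrangian description of isomorphisms between EPW-sextics, which is the natural generalization of (\ref{eq1}): the equation defining $Y_A$ is canonically built from $A$ via the morphism $\lambda_A$, so any $[f]\in\PGL(V_6)$ mapping $Y_{A_1}$ to $Y_{A_2}$ must satisfy $(\bigwedge^3 f)(A_1)=A_2$ (one checks this by noting that pulling back $\lambda_{A_2}$ along $f$ yields, up to scalars, the map $\lambda_{(\bigwedge^3 f)^{-1}A_2}$, whose determinant vanishes on $Y_{A_1}$; since $\mathbb A\in\mathbb{LG}(\bigwedge^3 V_6)^0$ by Proposition \ref{nodec}, the map $A\mapsto Y_A$ is injective on this open locus, forcing $(\bigwedge^3 f)^{-1}(A_2)=A_1$). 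The existence of such an $f$ contradicts Lemma \ref{lem_not_isomorphic}, completing the proof.

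The only genuinely non-formal step is the descent from a polarized isomorphism of the double covers to an element of $\PGL(V_6)$ intertwining the Lagrangians; everything else is bookkeeping with the linear systems $|H_i|$ and the rigidity of the correspondence $A\mapsto Y_A$ on $\mathbb{LG}(\bigwedge^3 V_6)^0$. I expect this descent to be the main (but still routine) point, and once it is in place Lemma \ref{lem_not_isomorphic} delivers the contradiction immediately.
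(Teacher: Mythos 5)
Your proof is correct, but it takes a genuinely different route from the paper. The paper's proof is a one-liner via Hodge theory: it invokes O'Grady's result (\cite[page~486]{o2015period}) that Lagrangians in $\mathbb{LG}(\bigwedge^3 V_6)^0$ lying in different $\PGL(V_6)$-orbits give double EPW-sextics with different periods, so a polarized isomorphism is impossible, and then Lemma~\ref{lem_not_isomorphic} supplies the orbit separation. You instead work projectively: a polarized isomorphism induces an isomorphism of the complete linear systems $|H_i|$, whose images are the $Y_{A_i}\subset\mathbb{P}(V_6)$, producing an element of $\PGL(V_6)$ carrying $Y_{A_1}$ to $Y_{A_2}$ and hence (by functoriality of the construction and injectivity of $A\mapsto Y_A$ on the locus of Lagrangians without decomposable vectors) satisfying $(\bigwedge^3 f)(A_1)=A_2$, contradicting the same lemma. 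This descent is exactly parallel to the paper's own discussion of $\Aut_H(\widetilde Y_A)\to\Aut(Y_A)$ preceding (\ref{eq2}), so it is legitimate; your argument is more elementary in that it avoids the period map entirely and uses only the material already set up in Section~\ref{preliminaries}, at the cost of needing one extra citation: the injectivity of $A\mapsto Y_A$ for $A$ without decomposable vectors (the isomorphism analogue of (\ref{eq1}), established in the same appendix of \cite{2018} that the paper cites) should be referenced explicitly rather than asserted, since it is the one non-formal input of your descent. With that reference in place, both proofs reduce the proposition to Lemma~\ref{lem_not_isomorphic}; the paper's buys brevity from a deep transcendental result, yours buys self-containedness.
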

\begin{proof}
    By \cite[page 486]{o2015period}, if $A_1, \, A_2 \in \mathbb{LG}(\bigwedge\nolimits^3V_6)\smallsetminus(\Sigma\cup\Delta)$ are not in the same orbit of $\PGL(\bigwedge^3 V_6)$, then $\widetilde{Y}_{A_1}$ and $\widetilde{Y}_{A_2}$ have different periods, so they cannot be isomorphic. Lemma \ref{lem_not_isomorphic} finishes the proof.
\end{proof}

We can note that as $Y_{A_1}$ and $Y_{A_2}$ are dual to each other, that makes $\widetilde{Y}_{A_1}$ and $\widetilde{Y}_{A_2}$ dual in the sense of the involution of the moduli space of double EPW-sextics described in \cite[Section 6]{o2006irreducible}. 

 A concatenation of the previous results gives a proof of the main theorem:
\begin{proof}[Proof of \autoref{thm:maint}]
    By \autoref{nodec}, we are in the hypothesis of \autoref{kuz} which describes the automorphism group of \(\widetilde{Y}_\mathbb{A}\) fixing the polarization. \autoref{isom} computes the group of symplectic automorphisms fixing the polarization. Finally, by \autoref{non_iso}, the examples are not isomorphic as polarized manifolds.
\end{proof}
We also obtain the following information on the constructed manifolds.
\begin{prop}
The transcendental lattice \(\T(\widetilde{Y}_{\mathbb{A}})\) is isomorphic to \begin{align*}
     \begin{pmatrix}
6 & 0 \\
0 & 70
\end{pmatrix},
\end{align*}
that is, there exists a basis $\{v_1, v_2\}$ of \ $\T(\widetilde{Y}_{\mathbb{A}})$ such that $v_1^2 = 6$, $v_2^2 = 70$, and $v_1 \cdot v_2 = 0$.
\end{prop}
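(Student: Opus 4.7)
The plan is lattice-theoretic: identify the $\mathcal{A}_7$-invariant sublattice of $\Homology^2(\widetilde{Y}_\mathbb{A},\mathbb{Z})$ from the Höhn–Mason classification, then cut out $\T(\widetilde{Y}_\mathbb{A})$ as the orthogonal to the polarization. Since by the corollary following Proposition \ref{nodec} the $\mathcal{A}_7$-action is symplectic, the induced action on the weight-two Hodge structure of $\widetilde{Y}_\mathbb{A}$ fixes $\Homology^{2,0}$, and, as $\widetilde{Y}_\mathbb{A}$ is of K3$^{[2]}$-type, it therefore acts trivially on the smallest primitive sublattice of $\Homology^2$ whose complexification contains $\Homology^{2,0}$, namely $\T(\widetilde{Y}_\mathbb{A})$. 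Consequently $\T(\widetilde{Y}_\mathbb{A})\subseteq \Lambda^{\mathcal{A}_7}:=\Homology^2(\widetilde{Y}_\mathbb{A},\mathbb{Z})^{\mathcal{A}_7}$, and $\mathbb{Z}H\subseteq \Lambda^{\mathcal{A}_7}$ by Proposition \ref{kuz}.

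Proposition \ref{isom} identifies $\mathcal{A}_7$ with the maximal group of symplectic polarized automorphisms, so its appearance in Table 6 of \cite{Hohn2019FiniteGO} pins down the coinvariant lattice $\Lambda_{\mathcal{A}_7}$ of rank $20$; hence $\Lambda^{\mathcal{A}_7}$ has rank $3$, signature $(3,0)$, and its discriminant form is forced by the $K3^{[2]}$-gluing with $\Lambda_{\mathcal{A}_7}$. The polarization $H$ comes from the double cover of a sextic in $\mathbb{P}^5$, so via the Fujiki relation $\int H^4=3\,q(H)^2=12$ one gets $q(H)=2$, and $H$ is primitive in $\NS(\widetilde{Y}_\mathbb{A})$. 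Because $\Lambda^{\mathcal{A}_7}$ is positive-definite while $\NS(\widetilde{Y}_\mathbb{A})$ is hyperbolic, $\Lambda^{\mathcal{A}_7}\cap\NS(\widetilde{Y}_\mathbb{A})$ has rank at most one, hence equals $\mathbb{Z}H$, and its rank-two orthogonal complement inside $\Lambda^{\mathcal{A}_7}$ is the rank-two lattice $\T(\widetilde{Y}_\mathbb{A})$.

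The computation then reduces to explicit linear algebra: take a primitive vector of square $2$ inside the rank-three positive-definite lattice $\Lambda^{\mathcal{A}_7}$ provided by Höhn–Mason, write down a Gram matrix for the orthogonal complement, and check that a suitable integral change of basis diagonalizes it to $\begin{pmatrix}6&0\\0&70\end{pmatrix}$. A consistency check is that $\det\bigl(\mathbb{Z}H\oplus \T(\widetilde{Y}_\mathbb{A})\bigr)=2\cdot 420 = 840$ must equal $[\Lambda^{\mathcal{A}_7}:\mathbb{Z}H\oplus\T(\widetilde{Y}_\mathbb{A})]^2\cdot\det\Lambda^{\mathcal{A}_7}$, which also fixes the index of the embedding.

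The main obstacle will be step three: Höhn–Mason determine $\Lambda^{\mathcal{A}_7}$ only abstractly (up to isomorphism), while the divisibility of the specific class $H$ inside $\Lambda^{\mathcal{A}_7}$ has to be pinned down in order for the orthogonal complement to be computed unambiguously. This may require either an additional ingredient from the $K3^{[2]}$-lattice gluing between $\Lambda^{\mathcal{A}_7}$ and $\Lambda_{\mathcal{A}_7}$, or a direct geometric input on the Lagrangian $\mathbb{A}$ (for example a computer-algebra check, in the spirit of the \texttt{Macaulay2} calculations already used for Proposition \ref{nodec}) identifying $H$ inside the known invariant lattice.
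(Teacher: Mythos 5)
Your reduction is sound and is essentially the route the paper takes: for a finite symplectic action the transcendental lattice sits inside the invariant lattice $\Lambda^{\mathcal{A}_7}\subset\Homology^2(\widetilde{Y}_\mathbb{A},\mathbb{Z})$, which has rank $3$ and signature $(3,0)$ by the H\"ohn--Mason classification; $q(H)=2$ by Fujiki; and $\T(\widetilde{Y}_\mathbb{A})$ is the orthogonal complement of $H$ in $\Lambda^{\mathcal{A}_7}$ (your primitivity and rank arguments here are all correct). The difference is that the paper does not redo any lattice computation: it simply quotes \cite[Table 1]{wawak}, which already tabulates the transcendental lattice of a projective \KTsquare{} carrying an action of a \emph{nontrivial overgroup} of $\mathcal{A}_7$ that fixes an ample class of square $2$, and then observes that $H^2=2$ and that $\Aut_H(\widetilde{Y}_\mathbb{A})\cong\mathcal{A}_7\times\langle\iota\rangle$ supplies such an overgroup.

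The genuine gap is the one you flag yourself and then leave open: knowing $\Lambda^{\mathcal{A}_7}$ only up to abstract isometry does not determine the pair $(\Lambda^{\mathcal{A}_7},H)$, since a rank-three positive-definite lattice may contain several $O(\Lambda^{\mathcal{A}_7})$-orbits of primitive square-$2$ vectors (and, beyond that, the divisibility of $H$ in the full K3$^{[2]}$-lattice $U^3\oplus E_8(-1)^2\oplus\langle-2\rangle$ must be controlled), so ``write down a Gram matrix for the orthogonal complement'' is not yet a proof. Your determinant consistency check constrains but does not resolve this ambiguity. This is precisely the content outsourced to \cite[Table 1]{wawak}; note also that the hypothesis actually used there is the action of an overgroup of $\mathcal{A}_7$ --- i.e.\ the presence of the anti-symplectic covering involution $\iota$ in addition to the symplectic $\mathcal{A}_7$ --- which your argument never invokes and which may be needed to pin the answer down uniquely. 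To make your version self-contained you would have to either carry out the orbit analysis on square-$2$ vectors in $\Lambda^{\mathcal{A}_7}$ explicitly, or cite the table as the paper does.
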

\begin{proof}
By \cite{o2006irreducible}, the polarization $H$ discussed earlier satisfies $H^2 = 2$.
According to \cite[Table 1]{wawak}, a projective \KTsquare{} admitting an action of a nontrivial overgroup of $\mathcal{A}_7$ and fixing an ample vector $h$ such that $h^2 = 2$ in $\NS(X) \subset \Homology^2(X, \mathbb{Z})$ must have the transcendental lattice from the statement.
\end{proof}

    In \cite{Hohn2019FiniteGO}, the authors constructed an example of projective hyper-Kähler manifold of type K3\(^{[2]}\) with a symplectic action of \(\mathcal{A}_7\) as the Fano variety of lines of a cubic fourfold with an action of \(\mathcal{A}_7\). However, the manifold \(\widetilde{Y}_{\mathbb{A}}\) is not even birational to the Fano variety of lines of a cubic fourfold with an action of \(\mathcal{A}_7\), since the two manifolds have different transcendental lattices according to \cite[Table 1]{wawak}.

\section{Irrational GM threefolds}
Let $X_\mathbb{A}$ be any (smooth) GM variety of dimension $3$ or $5$ associated with $\mathbb{A}$ and let $\Jac(X_\mathbb{A})$ be its intermediate Jacobian, which is a \(10\)-dimensional principally polarized Abelian variety.

From the fact that $Y_\mathbb{A}[2]$ is smooth, by \cite[Theorem 1.1]{kuznetsov2020gushel} there are a canonical principal polarization $\theta$ on the Albanese variety $\Alb(\widetilde{Y}_\mathbb{A}[2])$ and a canonical isomorphism
\begin{equation}\label{jaco}
    (\Jac(X_\mathbb{A}),\theta_{X_\mathbb{A}})\cong(\Alb(\widetilde{Y}_\mathbb{A}[2]),\theta)
\end{equation}
between principally polarized Abelian varieties. Furthermore, the tangent spaces at the origin of these varieties are isomorphic to $\mathbb{A}$. 

\begin{prop}
The principally polarized Abelian variety $(\Jac(X_\mathbb{A}),\theta_{X_\mathbb{A}})$ is indecomposable.
\end{prop}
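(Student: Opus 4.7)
The strategy is to derive a contradiction by combining the $\mathcal{A}_7$-symmetry of the Jacobian with the irreducibility of the tangent representation and the small-index subgroup structure of $\mathcal{A}_7$.

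First I would organize the action: composing the inclusion $\mathcal{A}_7 \hookrightarrow \widetilde{\Aut}(Y_\mathbb{A}) \hookrightarrow \Aut(\widetilde{Y}_\mathbb{A}[2])$ from Section~2 with the canonical isomorphism \eqref{jaco} yields an $\mathcal{A}_7$-action on the principally polarized abelian variety $(\Jac(X_\mathbb{A}),\theta_{X_\mathbb{A}})$ that preserves the polarization (since both $\theta$ and the isomorphism in \eqref{jaco} are canonical). By Proposition~\ref{analiticrep} together with \eqref{tangent}, the induced analytic representation on the $10$-dimensional tangent space at the origin is identified with the action of $\mathcal{A}_7$ on $\mathbb{A}\in\{A_1,A_2\}$, which is the irreducible $10$-dimensional representation $R_1$ or $R_2$.

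Next I would argue by contradiction: suppose $(\Jac(X_\mathbb{A}),\theta_{X_\mathbb{A}})\cong\prod_{i=1}^k(B_i,\theta_i)$ with $k\geq 2$ and each $(B_i,\theta_i)$ an indecomposable principally polarized abelian variety of positive dimension. By the classical uniqueness of the decomposition of a ppav into indecomposable ppavs, the $\mathcal{A}_7$-action permutes the factors. Since $\mathcal{A}_7$ is simple, the permutation action on each orbit is either trivial or faithful. A trivial orbit would give a nonzero proper $\mathcal{A}_7$-stable subspace $T_{B_i,0}\subset\mathbb{A}$, contradicting irreducibility; hence every orbit is faithful, and since $\mathcal{A}_7$ admits no faithful permutation action of degree less than $7$, every orbit has at least $7$ elements.

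Finally I would close the argument arithmetically: writing the orbit data as $(n_j,d_j)$ with $n_j\geq 7$ the orbit size and $d_j\geq 1$ the common dimension of its factors, $\sum_j n_j d_j = \dim\Jac(X_\mathbb{A}) = 10$ forces a single orbit with $(n_1,d_1)=(10,1)$. But a transitive $\mathcal{A}_7$-action on $10$ elements would require a subgroup of index $10$, and $\mathcal{A}_7$ has none: such a subgroup would lie in a maximal subgroup of index dividing $10$, whereas the maximal subgroups of $\mathcal{A}_7$ have indices $7,15,21,35,120$, none of which divides $10$. The main obstacle is setting up the polarization-preserving $\mathcal{A}_7$-action on $\Jac(X_\mathbb{A})$ carefully and invoking uniqueness of the indecomposable ppav decomposition to transfer the group action to the set of factors; once these are in place the combinatorial count is short and decisive.
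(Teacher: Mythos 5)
Your proof is correct, and its core is the same as the paper's: both arguments reduce indecomposability of the ppav to the irreducibility of the $10$-dimensional representation on the tangent space $\mathbb{A}$ at the origin, via \eqref{jaco}, \eqref{tangent} and Proposition~\ref{analiticrep}. The two write-ups differ in which delicate point they each choose to spell out. The paper is careful about the fact that the analytic representation is a representation $\rho_a$ of the double cover $\widetilde{\mathcal{A}}_7$, not of $\mathcal{A}_7$ itself; it only agrees with the irreducible representation $\rho\circ\psi$ after projecting to $\PGL(\mathbb{A})$ via diagram~\eqref{diagram}, i.e.\ up to scalars, which is enough to transfer (in)decomposability. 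You gloss over this by saying the analytic representation ``is identified with'' the $\mathcal{A}_7$-action on $\mathbb{A}$ --- strictly speaking it is only projectively identified with it, though this does not affect your invariant-subspace argument. Conversely, you make explicit a step the paper leaves implicit: a product decomposition into indecomposable ppavs only gives a direct-sum decomposition of the tangent space that is \emph{permuted} by the group, and an irreducible representation can a priori be imprimitive, so one must rule out a transitive permutation of the factors. Your orbit count (no trivial orbits by irreducibility, no faithful orbits of size $<7$, hence a single orbit of size $10$ with $1$-dimensional blocks, impossible since $\mathcal{A}_7$ has no subgroup of index $10$) closes this cleanly; the only blemish is the spurious ``$120$'' in your list of indices of maximal subgroups of $\mathcal{A}_7$ (the correct indices are $7$, $15$, $15$, $21$, $35$), which does not affect the conclusion since none of them divides $10$. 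In short: same strategy, with your version supplying the imprimitivity analysis and the paper's version supplying the scalar-ambiguity bookkeeping; a fully airtight proof would include both.
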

\begin{proof}
Suppose that this principally polarized Abelian variety is isomorphic to a product of $m\geq 2$ nonzero indecomposable principally polarized Abelian varieties.

Since $\mathcal{A}_7\cong\Aut(Y_\mathbb{A})$, the diagram (\ref{diagram}) reads:
 \begin{equation}
 \begin{tikzcd}
1 \arrow[r] & \langle \gamma^3\rangle \arrow[r] \arrow[d] & \widetilde{\mathcal{A}}_7 \arrow[r, "\psi"] \arrow[d, "\rho_a"'] & \mathcal{A}_7 \arrow[r] \arrow[d, dashed] \arrow[ld, "\rho"', dashed] & 1 \\
1 \arrow[r] & \mathbb{C}^* \arrow[r]                      & \GL(\mathbb{A}) \arrow[r, "\pi"]                                & \PGL(\mathbb{A}) \arrow[r]                                            & 1
\end{tikzcd}
 \end{equation}
where $\widetilde{\mathcal{A}}_7$ is an extension of $\mathcal{A}_7$ by the group of order two, $\rho_a$ is the analytic representation $\widetilde{\mathcal{A}}_7\rightarrow\GL(T_{\Jac(X_\mathbb{A}),0})$ by \autoref{analiticrep}, and $\rho$ is the irreducible representation $\mathbb{A}$. Now $ \rho_a\neq\rho \circ \psi$ since both representations are faithful, but the equality $\pi\circ \rho_a=\pi\circ\rho\circ \psi$ holds by construction (using the commutativity of the diagram (\ref{diagram})). This means that the two actions on $\mathbb{A}$ differ by a scalar multiplication, hence if one of the representations decomposes then the other must do so as well. We supposed that the Jacobian is a nontrivial product and so the analytic representation decomposes in the sum of the tangent spaces of the components, but this is a contradiction since $\mathbb{A}$ is irreducible as an $\mathcal{A}_7$-representation. 

\end{proof}
Now we are ready to prove \autoref{cor:mainc}.
\begin{proof}[Proof of \autoref{cor:mainc}]
The idea of proof is taken from \cite[Theorem 5.2]{debarre2021gushelmukai}.
We want to use the Clemens-Griffiths criterion: if the intermediate Jacobian of a Fano threefold is not the (polarized) product of Jacobians of curves, then the variety is irrational.

Since $(\Jac(X_\mathbb{A}),\theta_{X_\mathbb{A}})$ is indecomposable, we can reduce to the case where $(\Jac(X_\mathbb{A}),\theta_{X_\mathbb{A}})\cong(\Jac(C),\theta_C) $ for $C$ a curve of genus $10$. We have a faithful action of $\mathcal{A}_7$ on the Jacobian, by the Torelli theorem, the group of automorphisms of that principally polarized Abelian variety is either $\Aut(C)$ or $\Aut(C)\times \mathbb{Z}/2\mathbb{Z}$. In conclusion, $\mathcal{A}_7$ embeds in one of those two groups, but this is a contradiction since $|\mathcal{A}_7|=2520$ and $|\Aut(C)|<756$ by \cite[Theorem 3.7]{miranda1995algebraic}.

\end{proof}

\appendix 

\section{Macaulay computations}\label{codes}\label{deg40locus_code}

The following {\tt Macaulay2} code computes equations over a finite field for the EPW-sextic associated to a Lagrangian space, checks that the sextic is irreducible and has the right degree. It also computes the singular locus of the EPW-sextic and checks it is a surface of degree 40.
The two Lagrangian subrepresentations \(R_1\) and \(R_2\) are defined over $\mathbb{Q}[\xi_{21}]$ with \(\xi_{21}=\xi_3 \xi_7\) where \(\xi_3\) and \(\xi_7\) are respectively a third and a seventh root of unity (see \autoref{Characters}). In fact, \(\mathbb{Q}[i\sqrt{7}]\subset \mathbb{Q}[\xi_7]\subset\mathbb{Q}[\xi_{21}]\) since \(\alpha= \xi_7+\xi^2_7+\xi_7^4\) satisfies the quadratic equation \(\alpha^2+\alpha+2=0\) whose solutions are given by \(\frac{-1\pm i\sqrt{7}}{2}\). We run the computations over the finite field $\mathbb{F}_{127}$; the choice of \(p=127\) is such that the ideal \( (127)\) completely decomposes in the ring \(\mathbb{Z}[\xi_{21}]\) and hence the residue field at any prime in the decomposition is exactly \(\mathbb{F}_{127}\). Let \(\mathfrak{q}\) be a prime ideal in the primary decomposition of \((127)\subset \mathbb{Q}[\xi_{21}]\) and consider the discrete valuation ring \(D=\mathbb{Z}[\xi_{21}]_\mathfrak{q}\) given by localization at \(\mathfrak{q}\). We consider the proper map \(\mathbb{P}^5_D\rightarrow\Spec (D)\). The restriction to \(X[k]=Y_{\mathbb{A}}[k]\times_{\Spec(\mathbb{Z})}\Spec(D)\) is again proper for \(k=1,2,3\). The fiber \(X[k]_{(0)}\) over the the ideal \((0)\) corresponds to \(Y_{\mathbb{A}}[k]\), while the fiber \(X[k]_\mathfrak{q}\) over the ideal \(\mathfrak{q}\) corresponds to the reduction of \(Y_{\mathbb{A}}[k]\) over the residue field \(\mathbb{F}_{127}\) at \(\mathfrak{q}\). Observe that if \(X[k]_{\mathfrak{q}}\) is empty then \(X[k]_{(0)}\) is also empty. Moreover, \(X[k]\) is reduced and all its irreducible components dominate the Dedekind scheme \(\Spec(D)\), hence the family \(X[k]\rightarrow \Spec(D)\) is flat.

\lstinputlisting[language=Macaulay2]{M2code1.txt}
\lstinputlisting[language=Macaulay2]{M2code2.txt}
The code for computing the equations for EPW-sextics is based on the appendix of \cite{https://doi.org/10.48550/arxiv.2202.00301}. Here we give a sketch for the computation in an affine chart but the emptiness of a variety has to be checked on all the affine charts.

The locus $Y_\mathbb{A}[3]$ is computed as the zero locus of the ideal generated by the $8\times 8$ minors of the symmetric matrix that determines the EPW-sextic. The outcome of the computation is that the locus $Y_\mathbb{A}[3]$ is empty over the field \(\mathbb{F}_{127}\), and hence empty over \(\mathbb{Q}[\xi_{21}]\), proving that $\mathbb{A}\not\in \Delta$. The singular locus of \(X[1]_\mathfrak{q}\) is a surface of degree \(40\) and, by upper semicontinuity, the singular locus of \(X[1]_{(0)}=Y_{\mathbb{A}}\) has same degree and dimension. Hence, the singular locus of \(Y_{\mathbb{A}}\) must coincide with the smooth surface $Y_\mathbb{A}[2]$, since they both have degree $40$ \cite[Corollary 1.10]{o2012epw} and one is contained in the other. This is equivalent to $\mathbb{A}\not\in \Sigma$, under the condition \(\mathbb{A}\notin \Delta\).

\bibliographystyle{halpha-abbrv}
\bibliography{references}
\end{document}